\tikzset{my_dot/.style={fill, circle, inner sep=0pt,minimum size=3pt}}
\tikzset{my_node/.style={fill, circle, inner sep=0pt,minimum size=3pt}}
\tikzset{inv/.style={fill, circle, inner sep=0pt,minimum size=0pt}}
\newtheorem{theorem}{Theorem}[section]
\newtheorem{definition}[theorem]{Definition}
\newtheorem{proposition}[theorem]{Proposition}
\newtheorem*{theorem*}{Theorem}
\newtheorem*{corollary*}{Corollary}
\numberwithin{equation}{subsection}
\theoremstyle{definition}
\newtheorem{remark}[theorem]{Remark}
\newtheorem{example}[theorem]{Example}
\newcommand{\iso}{\cong}
\def\Q{\ensuremath{\mathbb{Q}}}
\def\R{\ensuremath{\mathbb{R}}}
\def\Z{\ensuremath{\mathbb{Z}}}
\def\Sets{\ensuremath{\mathsf{Sets}}}
\def\uX{\ensuremath{\underline{X}}}
\DeclareMathOperator{\Aut}{Aut}
\DeclareMathOperator{\im}{im}
\DeclareMathOperator{\op}{op}
\def\trop{\mathrm{trop}}
\newcommand{\double}{\genfrac..{0pt}1
{\raise -2pt\hbox{$\scriptstyle\longrightarrow$}}{\raise 4pt\hbox
{$\scriptstyle\longrightarrow$}}}
\begin{document}

\title{Tropical moduli spaces as symmetric $\Delta$-complexes}

\author{Daniel Allcock}
\address{UT Department of Mathematics, 2515 Speedway, RLM 8.100, Austin, TX 78712}
\email{allcock@math.utexas.edu}

\author{Daniel Corey}
\address{UW Department of Mathematics, Van Vleck Hall, 480 Lincoln Dr., Madison, WI  53706}
\email{dcorey@math.wisc.edu}

\author{Sam Payne}
\address{UT Department of Mathematics, 2515 Speedway, RLM 8.100, Austin, TX 78712}
\email{sampayne@utexas.edu}

\begin{abstract}
We develop techniques for studying fundamental groups and integral singular homology of symmetric $\Delta$-complexes, and apply these techniques to study moduli spaces of stable tropical curves of unit volume, with and without marked points. As one application, we show that $\Delta_g$ and $\Delta_{g,n}$ are simply connected, for $g \geq 1$.  We also show that $\Delta_3$ is homotopy equivalent to the $5$-sphere, and that $\Delta_4$ has $3$-torsion in $H_5$.
 \end{abstract}

\maketitle

\section{Introduction}

In this paper, we study the topology of $\Delta_g$, the link of the vertex in the moduli space $M_g^\trop$ of stable tropical curves of genus $g \geq 2$. The full moduli space $M_g^\trop$, as defined and studied in \cite{BrannettiMeloViviani11, acp}, is contractible because it is a cone, but the link $\Delta_g$ has exceedingly rich and interesting topology.  Indeed, the rational homology of $\Delta_g$ is naturally identified with both Kontsevich's graph homology and also with the top weight cohomology of the algebraic moduli space $M_g$; applications of this perspective include the proof that $\dim_\Q H^{4g-6}(M_g; \Q)$ grows exponentially with $g$ \cite{CGP1}. The homotopy type of $\Delta_g$ is an invariant of the algebraic moduli stack $\mathcal{M}_g$ \cite{Harper17}, and here we focus on its integral homology and homotopy groups.  

The points of $\Delta_g$ naturally correspond with isomorphism classes of stable tropical curves of genus $g$ and volume 1.  Here a tropical curve is a tuple $\mathbf{G} = (G, \ell, w)$, where $G$ is a connected graph (possibly with loops and multiple edges), $\ell$ is a positive real length assigned to each edge, and $w$ is a non-negative integer weight assigned to each vertex.  The genus is $g(\mathbf{G}) = h^1(G) + \sum_{v \in V(G)} w(v)$, the volume is $\mathrm{Vol}(\mathbf{G}) = \sum_{e \in E(G)} \ell(e)$, and the stability condition says that every vertex $v$ of weight zero has valence at least 3.  

The space $\Delta_g$ has the structure of a symmetric $\Delta$-complex, i.e., it is constructed by gluing quotients of simplices by finite groups in a manner analogous to the construction of a $\Delta$-complexes by gluing simplices \cite[Section~4.2]{CGP1}.  We recall the precise definition of a symmetric $\Delta$-complex in Section~\ref{sec:symmcomplex}.  In Sections~\ref{sec:fundamentalgroup}-\ref{sec:applications}, we develop tools for computing fundamental groups of symmetric $\Delta$-complexes and apply them to $\Delta_g$.

\begin{theorem} \label{thm:simplyconnected}
The space $\Delta_g$ is simply connected.
\end{theorem}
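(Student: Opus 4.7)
The plan is to apply the van-Kampen-type presentation of the fundamental group of a symmetric $\Delta$-complex developed in Section~\ref{sec:fundamentalgroup}.  I expect this to yield a presentation of $\pi_1(\Delta_g)$ with (i) one generator $\gamma_{\mathbf{G}}$ for each 1-cell (i.e.\ 2-edge stable weighted graph $\mathbf{G}$) whose two edge-contractions produce the same 1-edge stable weighted graph, corresponding to loops in the 1-skeleton; (ii) a relation $\gamma_{\mathbf{G}}^2 = 1$ whenever $\Aut(\mathbf{G})$ contains an automorphism swapping the two edges of $\mathbf{G}$; and (iii) relations from 2-cells (3-edge stable weighted graphs), where the boundary of a 2-cell is the cycle obtained by contracting each of its three edges in turn, with symmetry data imposing additional identifications among the boundary cycles.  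For $g=1$ the space $\Delta_g$ is a single point, so the content of the theorem is the case $g \geq 2$.

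First I would enumerate the loop-generating 2-edge stable weighted graphs of genus $g$.  These are combinatorially restricted; the basic examples include the wedge of two loops at a weight-$(g-2)$ vertex, a ``dumbbell'' (a loop at a vertex bridged to another weighted vertex, where the two edges are of different types so no edge swap appears), and a handful of variants with symmetric vertex weightings.  For each such $\mathbf{G}$ I would record its automorphism group and determine whether an edge-swap symmetry is present, giving the order-2 relation on $\gamma_{\mathbf{G}}$.

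To trivialize each generator $\gamma_{\mathbf{G}}$, I would exhibit a 3-edge stable weighted graph $\mathbf{G}'$ such that (a) contracting any of its three edges yields $\mathbf{G}$, and (b) $\Aut(\mathbf{G}')$ contains a 3-cycle on the three edges.  The corresponding 2-cell then contributes, after the symmetric identification of its boundary, a relation equivalent to $\gamma_{\mathbf{G}}^3 = 1$; combined with the relation $\gamma_{\mathbf{G}}^2 = 1$ from step (ii), this forces $\gamma_{\mathbf{G}} = 1$.  The prototypical example is the theta graph (two vertices joined by three parallel edges, with weights adjusted to genus $g$), which contracts along each edge to the wedge of two loops and whose $S_3$ symmetry cyclically permutes the three edges.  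Analogous threefold-symmetric blowups should dispatch each of the remaining loop-generators.

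The principal obstacle is the case analysis: verifying that every loop-producing 1-cell admits a trivializing 3-edge blowup, or at least a chain of such blowups, despite complications arising from nontrivial vertex weights, from edges that are themselves loops (whose half-edge reversal contributes extra $\Z/2$ symmetries), and from the requirement that all intermediate graphs remain stable.  A secondary technical issue that the formalism of Section~\ref{sec:fundamentalgroup} is designed to handle is the careful bookkeeping of orientations in the boundary-of-2-cell relations: without this one risks extracting a tautology such as $\gamma \cdot \gamma^{-1} \cdot \gamma = \gamma$ in place of the genuine relation $\gamma^3 = 1$, so the interaction between the symmetric-group action on a 2-simplex and the orientation of its boundary loop in $\pi_1$ must be tracked precisely.
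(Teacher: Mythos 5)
The paper's approach is entirely different and considerably cleaner. Rather than attempting any presentation of $\pi_1$, the paper proves Theorem~\ref{thm:fundamentalgroup}, which asserts only that $\pi_1(|X^{(1)}|) \to \pi_1(|X|)$ is \emph{surjective}, and then produces a large contractible subcomplex $\Delta_{g,n}^{bm}$ (Theorem~\ref{thm:contractible}, via the reflection criterion Proposition~\ref{prop:reflection}) that contains the entire $1$-skeleton of $\Delta_{g,n}$ when $g \geq 1$. Since every loop in the $1$-skeleton lies in a contractible subcomplex, its image in $\pi_1(\Delta_g)$ is trivial, and surjectivity finishes the argument. No enumeration of $2$-cells, no attaching-map bookkeeping, and no orientation analysis is needed.

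Your plan, by contrast, has a genuine gap that the paper itself goes out of its way to flag. Section~\ref{sec:fundamentalgroup} and Example~\ref{ex:FourCycle} show that for symmetric $\Delta$-complexes the relations in $\pi_1$ among loops in the $1$-skeleton are \emph{not} in general generated by boundaries of $2$-cells: attaching a single symmetric $3$-cell can kill $\pi_1$ even when the $2$-skeleton has $\pi_1 \cong \Z/2\Z$. So a ``van Kampen presentation from $1$-cells and $2$-cells'' computes, at best, $\pi_1(\Delta_g^{(2)})$, and the conclusion you want would only follow if $\pi_1(\Delta_g^{(2)})$ were already trivial — a strictly stronger statement than Theorem~\ref{thm:simplyconnected}, which the paper neither asserts nor needs, and for which you would have to verify that the case analysis actually closes up in the $2$-skeleton. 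There is no reason given (and no reason I see) to expect this a priori. Separately, your description of how a $\Z/3$-symmetric $2$-cell contributes a relation is off: if $\mathbf{G}'$ has a cyclic symmetry identifying its three edges and all three face maps land on the same loop $\gamma$, then the cell being attached is $\Delta^2/\Z_3$, whose boundary $\partial\Delta^2/\Z_3$ is a circle mapping \emph{once} around $\gamma$, so the relation imposed is $\gamma = 1$ directly, not $\gamma^3 = 1$ (the latter is what you would get from the unquotiented $\Delta^2$ with all three faces equal to $\gamma$). This makes the proposed interplay with a putative $\gamma^2 = 1$ relation moot, and it also signals that the precise attaching-map combinatorics in the symmetric setting need to be handled with more care than the proposal allows for. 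In short: even setting aside whether the case analysis can be completed, the strategy of reading off $\pi_1$ from $1$- and $2$-cells alone is exactly the naive cellular-approximation picture that the paper shows fails for symmetric $\Delta$-complexes, and the contractible-subcomplex argument is the device the paper introduces to route around it.
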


In addition, we develop tools for computing homology of symmetric $\Delta$-complexes with integer coefficients and for proving contractibility of subcomplexes.  We apply these to $\Delta_g$, with particular attention to examples where $g$ is small.  The space $\Delta_2$ has two maximal cells, each homeomorphic to a triangle, and these are glued along an edge.  In particular, $\Delta_2$ is contractible. Using the comparison to graph homology, it is also easy to see that $\Delta_3$ and $\Delta_4$ have the rational homology of $S^5$ and a point, respectively.

\begin{theorem} \label{thm:sphere}
The space $\Delta_3$ is homotopy equivalent to $S^5$.
\end{theorem}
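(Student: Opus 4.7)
The plan is to reduce Theorem \ref{thm:sphere} to an integral homology calculation and then invoke Hurewicz and Whitehead. Since $\Delta_3$ is simply connected by Theorem \ref{thm:simplyconnected}, once we show $H_*(\Delta_3;\Z) \iso H_*(S^5;\Z)$, the Hurewicz theorem yields $\pi_5(\Delta_3) \iso \Z$; choosing a generator produces a map $f \col S^5 \to \Delta_3$ inducing an isomorphism on integer homology, and Whitehead's theorem, applicable since both spaces are simply connected CW complexes, upgrades $f$ to a homotopy equivalence.

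The main task is therefore the integer homology computation. Note that $\Delta_3$ has dimension $3g - 4 = 5$, so $H_i(\Delta_3;\Z) = 0$ for $i > 5$ is automatic. Rationally, $H_*(\Delta_3;\Q) \iso H_*(S^5;\Q)$ is already known from the identification with Kontsevich graph homology cited in the introduction. What remains is to rule out integer torsion in $H_i(\Delta_3;\Z)$ for $i \le 5$ and to confirm $H_5 \iso \Z$ rather than some larger free abelian group. I would proceed by enumerating cells of $\Delta_3$, which correspond to isomorphism classes of stable weighted genus-$3$ graphs, and then applying the contractibility tools for subcomplexes of symmetric $\Delta$-complexes developed earlier in the paper. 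Specifically, one locates a contractible subcomplex $\cY \subset \Delta_3$ spanned by graphs possessing simplifying features such as loops, bridges, or positive vertex weights, so that the quotient $\Delta_3 / \cY$ has only a handful of cells and its integer homology can be computed by inspection.

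The principal obstacle is choosing $\cY$ sharply enough to leave a manageable quotient while still verifying contractibility of $\cY$ via the earlier machinery. A secondary subtlety, intrinsic to symmetric $\Delta$-complexes, is that a cell whose automorphism group acts on it by an orientation-reversing element contributes zero to the integer chain complex; careful bookkeeping of the $\Aut$-action on each orbit of cells is thus needed to avoid manufacturing spurious $2$-torsion in the answer. Once these issues are handled, a direct boundary-map calculation on the quotient should give the desired $H_*(\Delta_3;\Z) \iso H_*(S^5;\Z)$, completing the proof via the homotopy-theoretic step above.
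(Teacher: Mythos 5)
Your overall strategy is sound in outline, but it is a genuinely different route from the paper's, and it contains a gap at exactly the place where the hard work lies. The paper does not pass through Hurewicz/Whitehead at all: it shows $\Delta_3 \simeq \Delta_3 / \Delta_3^{bm}$ using the contractible subcomplex $\Delta_3^{bm}$ of Theorem~\ref{thm:contractible}, observes that the only genus-$3$ stable graph outside $\Delta_3^{bm}$ is $K_4$, so that $\Delta_3/\Delta_3^{bm}$ is homeomorphic to the unreduced suspension $S(S^4/\mathfrak{S}_4)$, and then invokes Lange's classification of orthogonal quotients of spheres (every transposition acts on $\Delta^5$ as a double transposition of vertices, i.e.\ a rotation in Lange's sense, so $S^4/\mathfrak{S}_4$ is a PL $4$-sphere). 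The conclusion $\Delta_3 \simeq S^5$ then follows by a homeomorphism, with no need to know simple connectivity or to compute homology. The virtue of the paper's route is that it localizes the entire problem to one orbifold cell and resolves it with an off-the-shelf structure theorem for sphere quotients.

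The gap in your proposal is in the sentence ``a direct boundary-map calculation on the quotient should give the desired $H_*(\Delta_3;\Z)$,'' together with the remark about bookkeeping orientation-reversing automorphisms. That bookkeeping is the mechanism behind the \emph{rational} cellular chain complex of \cite{CGP1} (see the discussion after Theorem~\ref{thm:cellularhom}: with $\Q$-coefficients the spectral sequence collapses to the $q=0$ row). With $\Z$-coefficients there is no such cellular chain complex for a symmetric $\Delta$-complex; one must use the skeletal spectral sequence of Theorem~\ref{thm:cellularhom} or \ref{thm:cellularhom2}, and its $E_1$-page is built from the groups $\widetilde H_*(S^{p-1}/G_i;\Z)$. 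Once you pass to $Z=\Delta_3^{bm}$, the only nonzero $E_1$ contribution above the bottom row comes from the single $K_4$ cell and equals $\widetilde H_{4+q}(S^4/\mathfrak{S}_4;\Z)$. So you cannot avoid understanding the integral topology of $S^4/\mathfrak{S}_4$: either by Lange's theorem (as in the paper) or by an explicit computation as the paper does for $\Delta_4$. Without that, the claim that the homology ``can be computed by inspection'' is unsupported, and it is precisely where spurious torsion could in principle appear (compare the genuine $3$- and $2$-torsion arising from $S^6/\Aut(G)$ and $S^7/\Aut(K_{3,3})$ in the $\Delta_4$ computation). If you do supply the fact that $S^4/\mathfrak{S}_4 \cong S^4$, your Hurewicz/Whitehead framing closes, but at that point you have essentially reproduced the paper's key step and the homotopy-theoretic wrapper becomes unnecessary.
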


\begin{theorem} \label{thm:torsion}
The reduced integral homology $\widetilde H_k(\Delta_4;\Z)$ contains nontrivial $3$-torsion for $k = 5$ and nontrivial $2$-torsion for $k = 6, 7$. It vanishes for $k \neq 5, 6, 7$.
\end{theorem}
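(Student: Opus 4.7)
The plan is to reduce the computation of $\widetilde{H}_*(\Delta_4;\Z)$ to an explicit Smith normal form calculation on a small chain complex, obtained by collapsing a contractible subcomplex of $\Delta_4$.

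First, I would exhibit a contractible subcomplex $W \subseteq \Delta_4$ using the contractibility criteria developed earlier in the paper. A natural candidate is the subcomplex of tropical curves whose underlying weighted graph has either a bridge, a loop, or a vertex of positive weight, mirroring the strategy used for the rational computation in \cite{CGP1}. Contractibility of $W$ gives $\widetilde{H}_*(\Delta_4;\Z) \iso \widetilde{H}_*(\Delta_4/W;\Z)$, and the cells of the quotient are indexed by isomorphism classes of connected, bridgeless, loopless, weight-zero stable graphs $G$ of genus $4$. For such $G$ on $v$ vertices and $e$ edges, $e - v = 3$ and $2e \geq 3v$ force $v \leq 6$ and $e \leq 9$, yielding a finite and manageable list; the top cells correspond to trivalent graphs with $v=6$, $e=9$, giving chains in degree $8 = 3g-4$.

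Next, I would assemble the cellular chain complex of $\Delta_4/W$ as a symmetric $\Delta$-complex. Recall that a cell $\sigma_G$ contributes nontrivially only when $\Aut(G)$ acts on the set of orderings of $E(G)$ by even permutations; otherwise $\sigma_G$ is killed by its own automorphisms. The boundary maps are signed sums over edge contractions. Enumerating the surviving graphs and contractions with computer assistance, I would assemble integer matrices for each $\partial_k$ and read $\widetilde{H}_k(\Delta_4;\Z)$ off their Smith normal forms. The predicted $3$-torsion in degree $5$ and $2$-torsion in degrees $6,7$ should appear as the nontrivial elementary divisors, while all other degrees must be zero.

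The chief obstacle is purely combinatorial bookkeeping: correctly enumerating the relevant isomorphism classes, tracking each $\Aut(G)$ along with its sign representation on orderings of $E(G)$, and computing the signs in the boundary maps. Useful cross-checks are that the alternating sum of reduced ranks must equal $\chi(\Delta_4)-1 = 0$, that the rational homology is forced to agree with $H_*(\mathrm{pt};\Q)$ (so every nonzero reduced homology group must be pure torsion), and that the identification of $\widetilde{H}_*(\Delta_g;\Q)$ with Kontsevich graph homology provides independent control on low-degree ranks.
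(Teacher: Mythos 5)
There is a fundamental gap. The ``cellular chain complex'' you invoke --- generators indexed by cells $\sigma_G$ for which $\Aut(G)$ acts on orderings of $E(G)$ by even permutations, with the remaining cells discarded --- only computes \emph{rational} homology of a symmetric $\Delta$-complex, not integral homology. The discard condition comes from the fact that $\widetilde H_{p-1}(S^{p-1}/G_i;\Q)$ is $\Q$ when $G_i$ preserves orientation and $0$ otherwise, and that \emph{all other} reduced rational homology of $S^{p-1}/G_i$ vanishes. Over $\Z$ the second fact fails: the quotients $S^{p-1}/G_i$ can and do have torsion in intermediate degrees. Consequently there is no single chain complex built from the cells of $\Delta_4/W$ whose homology is $\widetilde H_*(\Delta_4;\Z)$; one has at best a spectral sequence (Theorem~\ref{thm:cellularhom2}) whose $E_1$-page has genuinely nonzero entries in $q<0$ rows given by these torsion groups. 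Since $\widetilde H_*(\Delta_4;\Q)=0$, the $q=0$ row of that spectral sequence is exact, so the rational cellular chain complex --- even lifted naively over $\Z$ --- detects nothing; your Smith normal form computation would report that every $\widetilde H_k$ vanishes, contradicting the statement you are trying to prove. All of the asserted torsion in $H_5$, $H_6$, $H_7$ originates from the torsion in $\widetilde H_*(S^{p-1}/\Aut(G);\Z)$ for the remaining cells, which your proposed chain complex simply cannot see.

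The paper's argument differs in two respects. First, it collapses a strictly larger contractible subcomplex, $\Delta_4^{bm}$, which additionally absorbs graphs with multiple edges (Theorem~\ref{thm:contractible}, using Proposition~\ref{prop:reflection}: interchanging parallel edges is a reflection, so these sphere quotients are contractible). This reduces the cells outside the subcomplex to exactly three graphs --- the square-pyramid edge graph, the triangular-prism edge graph, and $K_{3,3}$. Your smaller subcomplex $W=\Delta_4^{br}$ would leave many more cells, so even apart from the main gap, the bookkeeping would be substantially heavier. Second, and essentially, the paper applies the spectral sequence of Theorem~\ref{thm:cellularhom2}: it computes (with computer assistance, via barycentric subdivision) the full integral reduced homology of $S^{6}/\Aut(G)$, $S^{7}/\Aut(G')$, and $S^{7}/\Aut(K_{3,3})$, enters these torsion groups into the $E_1$-page, and then analyzes differentials and degeneration. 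If you want to retain your ``one chain complex plus Smith normal form'' philosophy, you would need to replace the cellular chain complex with an honest simplicial or CW chain complex of an actual triangulation or CW structure on $\Delta_4/\Delta_4^{bm}$ (for instance, via barycentric subdivision of the whole quotient), which is a different and much larger object than what you describe.
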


\noindent In particular, $\Delta_4$ is not contractible. 

\smallskip

Our methods apply equally well to moduli spaces of stable tropical curves with marked points.  For non-negative integers $g$ and $n$, such that $2g-2+n > 0$, the moduli space $M_{g,n}^\trop$ of stable tropical curves of genus $g$ with $n$ marked points has pure dimension $3g-3+n$. In particular, when $3g-3+n > 0$, the link $\Delta_{g,n}$ is not empty.   A point of $\Delta_{g,n}$ corresponds to a tropical curve $(G,\ell, w)$ of volume 1, together with a marking function $m \colon \{1,\ldots, n\} \to V(G)$, and the stability condition says that, for every vertex $v$ of weight zero, the valence of $v$ plus $|m^{-1}(v)|$ is at least 3. The topology of $\Delta_{g,n}$ is relatively simple when $g$ is small.  Indeed, $\Delta_{0,n}$ is homotopy equivalent to a wedge sum of $(n-2)!$ spheres of dimension $n - 4$ \cite{Vogtmann90}.  Also, $\Delta_{1,n}$ is contractible for $n = 1, 2$, and homotopy equivalent to a wedge sum of $(n-1)!/2$ spheres of dimension $n-1$, for $n \geq 3$ \cite[Theorem~1.2]{cgp3}.   Note, in particular, that $\Delta_{0,4}$ is not connected, and $\Delta_{0,5}$ is connected but not simply connected.

\begin{theorem} \label{thm:simplyconnected-markedpoints}
The space $\Delta_{g,n}$ is simply connected for $(g,n) \neq (0,4), (0,5)$.
\end{theorem}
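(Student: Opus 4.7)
The cases $g \leq 1$ are already handled by the results cited just above the theorem statement: $\Delta_{0,n}$ is a wedge of $(n-4)$-spheres, hence simply connected once $n \geq 6$; $\Delta_{1,1}$ and $\Delta_{1,2}$ are contractible; and $\Delta_{1,n}$ is a wedge of $(n-1)$-spheres for $n \geq 3$, which is simply connected because $n-1 \geq 2$. It therefore suffices to treat $g \geq 2$ and $n \geq 1$, and I would proceed by induction on $n$, the base case $n = 0$ being Theorem~\ref{thm:simplyconnected}.

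For the inductive step, the natural tool is the forgetful map $\pi \col \Delta_{g,n} \to \Delta_{g,n-1}$ that deletes the $n$-th marked point and then stabilizes by contracting any resulting $2$-valent weight-zero vertex. Let $A \subseteq \Delta_{g,n}$ be the closed subcomplex of curves in which the $n$-th mark sits at a vertex $v$ that remains stable after the $n$-th mark is forgotten---equivalently, $v$ has positive weight, carries another mark, or has valence at least $3$ without the $n$-th. On $A$ the stabilization step of $\pi$ is trivial, and $\pi|_A$ admits a canonical continuous section built from the combinatorial data of the domain curve; in fact $\pi|_A$ is a homotopy equivalence, so $A$ is simply connected by the inductive hypothesis. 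The complement of $A$ is the locus where the $n$-th mark alone occupies a weight-zero divalent vertex subdividing some edge of the stabilized curve, and over each stratum of $\Delta_{g,n-1}$ it is a one-dimensional fibration by the edges of the underlying graph.

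To complete the induction, I would apply the symmetric-$\Delta$-complex version of van Kampen from the earlier sections to open thickenings of $A$ and of its complement, with connected intersection, and deduce that every loop in $\Delta_{g,n}$ is homotopic into $A$, hence null-homotopic. The main obstacle is controlling loops produced by sliding the $n$-th mark around a nontrivial cycle in a fibre of $\pi$. I expect to kill such a loop by a degeneration argument: shrink the length of an edge in the cycle to zero along a continuous path in $\Delta_{g,n}$, thereby moving the loop onto a boundary stratum where the cycle has collapsed and the loop has been absorbed into $A$, where it is already null-homotopic by induction. Making these degenerations rigorous within the symmetric $\Delta$-complex framework, and confirming that enough of them are available to kill every cycle of every fibre, is the delicate step; it is precisely here that the fundamental-group and contractibility-of-subcomplex machinery developed earlier in the paper should be brought to bear.
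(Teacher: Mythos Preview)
Your plan diverges substantially from the paper's proof and leaves the hardest part unresolved. The paper does not induct on $n$ via the forgetful map at all. Instead, for every $g\geq 1$ it argues directly and uniformly: by Theorem~\ref{thm:fundamentalgroup}, $\pi_1(\Delta_{g,n})$ is generated by loops in the $1$-skeleton; but a stable tropical curve with one or two edges necessarily has a loop, a bridge, or a pair of parallel edges, so $\Delta_{g,n}^{(1)}\subset\Delta_{g,n}^{bm}$; and $\Delta_{g,n}^{bm}$ is contractible by Theorem~\ref{thm:contractible}. Three sentences, no induction, no forgetful map, no van Kampen beyond what is already inside the proof of Theorem~\ref{thm:fundamentalgroup}.

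Your approach, by contrast, has real obstacles. First, the forgetful map $\Delta_{g,n}\to\Delta_{g,n-1}$ is not globally defined on the links: stabilization after forgetting the $n$-th mark can contract an edge, and if that was the only edge the image has volume zero and lands at the cone point of $M_{g,n-1}^{\trop}$, not in $\Delta_{g,n-1}$. Second, there is no ``symmetric-$\Delta$-complex version of van Kampen'' developed earlier in the paper to appeal to; Theorem~\ref{thm:fundamentalgroup} is the only $\pi_1$ tool provided, and it says precisely that loops in the $1$-skeleton generate. Third, the step you yourself flag as delicate---killing loops that slide the $n$-th mark around a cycle of a fibre---is the whole content of the argument, and ``shrink an edge to zero'' is exactly a homotopy into the contractible subcomplex $\Delta_{g,n}^{bm}$; once you notice that, the forgetful-map scaffolding is superfluous and you are back to the paper's direct argument.
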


The $p$-skeleton of a symmetric $\Delta$-complex is the union of cells of dimension at most $p$. In Section~\ref{sec:fundamentalgroup} we show that the fundamental group of a symmetric $\Delta$-complex is generated by loops in the $1$-skeleton.  However, for $k > 1$, we give examples showing that $\pi_k$ (resp.\ $H_k$) is not generated by spheres (resp.\ cycles) in the $k$-skeleton. Even for $k = 1$, the relations among spheres (resp.\ cycles) in the $k$-skeleton are not necessarily generated by boundaries of balls (resp.\ chains) in the $(k+1)$-skeleton.  In particular, the relations in $\pi_1$ among the classes of loops in the $1$-skeleton are not always generated by boundaries of discs in the $2$-skeleton.

In Section~\ref{sec:spectral} we describe the spectral sequence associated to the filtration by $p$-skeleta.  Taken with rational coefficients, this spectral sequence recovers the rational cellular homology theory developed in \cite{CGP1}. With integer coefficients, it provides new information on torsion in homology of symmetric $\Delta$-complexes.  We refine this approach by considering an analogous spectral sequence relative to a subcomplex, which is essential for the proof of Theorem~\ref{thm:torsion}. The idea of computing homology of moduli spaces of tropical curves relative to large contractible subcomplexes is not new; contractibility of $\Delta_{g,n}^{br}$, the closure of the locus of tropical curves with bridges, is essential to the main results in \cite{cgp3}.  Here we use an even larger contractible subcomplex, given by Theorem~\ref{thm:contractible}.

\smallskip

The techniques we develop to study $\Delta_{g,n}$ as a symmetric $\Delta$-complex apply to more general filtered spaces, in which a $p$-skeleton is obtained from a $(p-1)$-skeleton as the mapping cone for a continuous map from a disjoint union of quotients of $(p-1)$-spheres by finite subgroups of the orthogonal group.  We call these \emph{symmetric CW-complexes}, and Theorems~\ref{thm:fundamentalgroup}, \ref{thm:cellularhom}, and \ref{thm:cellularhom2} are proved for this more general class of spaces.  Symmetric  CW-complexes that are not symmetric $\Delta$-complexes appear naturally in algebraic geometry as dual complexes of boundary divisors in non-simplicial toroidal compactifications, such as the Voronoi compactifications of moduli spaces of abelian varieties.

\begin{remark}
The space $\Delta_g$ has several natural interpretations in algebraic geometry, low-dimensional topology, and geometric group theory; see \cite[pp.~1--2]{CGP1}. Notably, it is the quotient of Harvey's complex of curves on a closed orientable surface of genus $g$ by the action of the mapping class group, and is also the quotient of the simplicial completion of Culler-Vogtmann Outer Space by the action of $\mathrm{Out}(F_g)$. It is homotopy equivalent to the one point completion of the quotient of Outer Space itself by the action of $\mathrm{Out}(F_g)$.  Nevertheless, the rational homology of $\Delta_g$ has little relation to the rational homology of $\mathrm{Out}(F_g)$ \cite[Section~7]{cgp3}, and we do not know of any greater relation between the torsion homologies.  For an alternate proof of Theorem~\ref{thm:simplyconnected}, using ideas from geometric group theory, see Remark~\ref{rem:alternateproof}.
\end{remark}

\begin{remark}
Theorem~\ref{thm:torsion} is not the first observed instance of torsion in the homology of the spaces $\Delta_{g,n}$.  Indeed, $H_*(\Delta_{2,n},\Z)$ has torsion in degree $n+1$, for odd $n \geq 5$ \cite{Chan15}.  However, it is the first instance of torsion in degree less than $\max\{2g-1,2g-3+n\}$, the bound on low degree rational homology of $\Delta_{g,n}$ induced by vanishing of high degree cohomology on the algebraic moduli space $\mathcal{M}_g$ \cite[Theorem~1.6]{cgp3}.
\end{remark}

\noindent \textbf{Acknowledgments.} We thank Ben Blum-Smith for helpful conversations related to quotients of spheres by subgroups of permutation groups and A.~Putman for explaining the alternate proof that $\Delta_g$ is simply connected, presented in Remark~\ref{rem:alternateproof}.  The work of DA is supported in part by Simons Foundation Collaboration Grant 429818, the work of DC is supported in part by NSF RTG Award DMS--1502553, and the work of SP is supported in part by NSF DMS--1702428.

\section{Structure of symmetric $\Delta$-complexes} \label{sec:symmcomplex}

We begin by recalling the definition of symmetric $\Delta$-complexes, following \cite[Section~4]{CGP1}.  Let $I$ be the category with objects $[p] = \{ 0, \ldots, p \}$, for $p \geq 0$, with morphisms given by injective maps.  Recall that a symmetric $\Delta$-complex is a functor $X \colon I^{\op} \to \Sets$.  Such a functor is determined by a set $X_p = X([p])$ for each $p \geq 0$, actions of the symmetric group $\mathfrak{S}_{p+1}$ on $X_p$ for all $p$, and face maps $d_i \colon X_p \to X_{p-1}$ for $p \geq 1$, obtained by applying the functor $X$ to the unique order-preserving injective map $[p-1] \to [p]$ whose image does not contain $i$.  The face maps satisfy the usual simplicial identities as well as a compatibility with the symmetric group action.

An injection $\theta \colon [p] \to [q]$ determines an inclusion of standard simplices $\theta_* \colon \Delta^{p} \to \Delta^{q}$, whose image is the $p$-face with vertices corresponding to the image of $\theta$.  The \emph{geometric realization} of a symmetric $\Delta$-complex $X$ is 
\begin{equation}\label{eq:realization}
  |X| = \Big(\coprod_{p = 0}^\infty X_p \times \Delta^p\Big) \big / \sim,
\end{equation}
where $\sim$ is the equivalence relation generated by $(x,\theta_* a) \sim (\theta^* x, a)$. 
Each $x \in X_p$ determines a map of topological spaces $x\colon \Delta^p \to |X|$, which factors through the quotient of $\Delta^p$ by the stabilizer $H_x < \mathfrak{S}_{p+1}$, and $X$ may be recovered from the topological space $|X|$ together with this set of maps from simplices.

\medskip

Let $\uX_p \subset X_p$ be a subset consisting of one representative of each $\mathfrak{S}_{p+1}$-orbit.  Then $|X|$ is partitioned into \emph{cells}
\[
|X| = \coprod_{p} \coprod_{x \in \uX_p} (\Delta^p)^\circ / H_x,
\]
each isomorphic to the quotient of an open simplex by a linear finite group action.  Note that the closure of each $p$-cell meets only finitely many cells, each of dimension less than $p$.  The properties of this stratification are hence closely analogous to the properties of a CW-complex, except that the cells are quotients of open balls by finite groups, rather than ordinary open balls.  We capture this analogy with the following definition.

\begin{definition} \label{def:gcw}
A \emph{symmetric CW-complex} is a Hausdorff topological space $X$ together with a partition into locally closed \emph{cells}, such that, for each cell $C \subset X$, there is a continuous map from the quotient of the closed unit ball in some $\R^p$ by a finite subgroup of the orthogonal group such that
\begin{enumerate}
\item The quotient of the open unit ball maps homeomorphically onto $C$, and
\item The image meets only finitely many cells, each of dimension less than $p$.
\end{enumerate}
We require furthermore that a subset of $X$ is closed if and only if its intersection with closure of each cell is closed.  
\end{definition}

We say that a symmetric CW-complex is \emph{finite} if it has only finitely many cells.  All of the applications we consider involve only finite symmetric CW-complexes.

\begin{definition}
The $p$-skeleton of a symmetric CW-complex, denoted $X^{(p)} \subset X$, is the union of its cells of dimension at most $p$.  
\end{definition}

Suppose $X$ has $p$-cells $\{ C_i : i \in I \}$.  Let $G_i \leq O(p)$ be a subgroup of the orthogonal group for which there is a continuous map $B^p / G_i \to X$ taking $(B^p)^\circ / G_i$  homeomorphically onto $C_i$.  Note that $B^p / G_i$ is cone shaped around the image of the origin, and that $X^{(p)}$ is naturally identified with the mapping cone of the attaching map
\[
\coprod_{i \in I} \partial B^p / G_i \to X^{(p-1)}.
\]
The total space $X$ is the direct limit of its skeleta $X^{(p)}$.

\begin{example}
Let $X$ be a symmetric $\Delta$-complex.  Then $|X|$, together with its partition into cells, is a symmetric CW-complex.
\end{example}

\begin{example} \label{ex:generalizedcone}
Let $\Delta$ be a generalized cone complex, as defined in \cite{acp}, obtained as the colimit of a diagram of rational polyhedral cones with face maps.  Then the geometric realization $|\Delta|$ is partitioned into cells by the images of the relative interiors of the cones in the diagram.  The link of the vertex, with its induced partition into cells, is a symmetric CW-complex.
\end{example}

\begin{remark}
Example~\ref{ex:generalizedcone} shows that symmetric CW-complexes appear naturally in algebraic geometry, as dual complexes of boundary divisors in toroidal embeddings.  In cases where the toroidal embedding is non-simplicial, as is the case in such well-studied examples as the perfect cone and second Voronoi toroidal compactifications of moduli spaces of abelian varieties, the resulting symmetric CW-complex is not a symmetric $\Delta$-complex.  While the applications to moduli spaces of tropical curves stated in the introduction require only the special case of symmetric $\Delta$-complexes, we expect that symmetric CW-complexes will be useful for future applications, e.g., for studying the top weight cohomology of the moduli space of abelian varieties.  Since the proofs of our basic technical results (Theorems~\ref{thm:fundamentalgroup}, \ref{thm:cellularhom}, and \ref{thm:cellularhom2}) work equally well for symmetric CW-complexes, we present them in this greater level of generality.
\end{remark}

\section{How attaching $p$-cells affects homology and homotopy groups of symmetric CW-complexes} \label{sec:fundamentalgroup}

Let $X$ be a symmetric CW-complex obtained from a subcomplex $X'$ by attaching cells of dimension $p$.  In stark contrast with ordinary CW-complexes, the pair $(X, X')$ is not necessarily $(p-1)$-connected, i.e., the inclusion $X' \hookrightarrow X$ need not induce isomorphisms on homotopy groups $\pi_i$ for $i < p-1$ and a surjection for $i = p-1$.  See Examples~\ref{ex:RPn}-\ref{ex:notsurjective}.  Hence, the classical cellular approximation theorems for CW-complexes do not extend to symmetric CW-complexes.  The following theorem is the key technical step in the proof of Theorems~\ref{thm:simplyconnected} and \ref{thm:simplyconnected-markedpoints}; it shows that cellular approximation holds in dimension 1, even though it fails in all higher dimensions.

\begin{theorem} \label{thm:fundamentalgroup}
Let $X$ be a finite symmetric CW-complex with $x \in |X^{(1)}|$.  Then the natural map $\pi_1(|X^{(1)}|,x) \to \pi_1(|X|,x)$ is surjective.
\end{theorem}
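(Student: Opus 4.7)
My plan is to induct on skeletal dimension, attaching cells one dimension at a time. Since $X$ is finite, $|X| = |X^{(N)}|$ for some $N$, so composing surjections reduces the task to showing that for every $p \geq 2$ the inclusion $|X^{(p-1)}| \hookrightarrow |X^{(p)}|$ induces a surjection on $\pi_1$.

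For the inductive step I would apply the Seifert--van Kampen theorem. The key observation is that each $p$-cell is a quotient $B^p/G_i$ with $G_i \leq O(p)$, which is a cone on $\partial B^p/G_i \cong S^{p-1}/G_i$ and hence contractible. Cover $|X^{(p)}|$ by the open sets $V$, the complement of the finite set of cone points (the images of the origins of the $p$-cells), and $U_i$, a small open neighborhood of the $i$-th cone point. Because $G_i$ acts orthogonally, the standard radial homotopy on $B^p$ is $G_i$-equivariant, so it descends to a deformation retraction of $V$ onto $|X^{(p-1)}|$ and of each $V \cap U_i$ onto $S^{p-1}/G_i$; each $U_i$ is contractible; and for $p \geq 2$ the connectedness of $S^{p-1}$ forces $S^{p-1}/G_i$ to be path-connected.

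These are precisely the hypotheses for van Kampen (restricting throughout to the path component of $x$, which is preserved when attaching cells of dimension $\geq 2$, since the attaching locus $S^{p-1}/G_i$ is connected). Because $\pi_1(U_i) = 1$, the pushout identifies $\pi_1(|X^{(p)}|, x)$ with the quotient of $\pi_1(|X^{(p-1)}|, x)$ by the normal subgroup generated by the images (after conjugation by chosen paths back to $x$) of the $\pi_1(S^{p-1}/G_i)$. In particular the inclusion-induced map on $\pi_1$ is a surjection.

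I do not expect a serious obstacle; once one observes that the attached cells are contractible, the argument is essentially immediate from van Kampen. The subtlety worth flagging --- and the reason the analogous cellular approximation statement fails in higher dimensions --- is that $\pi_1(S^{p-1}/G_i)$ can be nontrivial even for $p \geq 3$, so attaching a $p$-cell can \emph{kill} $\pi_1$-classes rather than leaving $\pi_1$ unchanged as in the CW case. The same phenomenon, applied to $\pi_k$ for $k \geq 2$, shows that the attaching ``spheres'' $S^{p-1}/G_i$ are in general too topologically rich to support analogous cellular approximation in dimensions $\geq 2$, consistent with the examples referenced in the section introduction.
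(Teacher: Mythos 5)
Your proposal is correct and takes essentially the same approach as the paper: both arguments apply van Kampen to a cover consisting of the cone-point neighborhood(s) (contractible) and the complement of the cone point(s) (which $G$-equivariantly deformation-retracts onto the lower skeleton), using connectedness of $S^{p-1}/G$ for $p \geq 2$. The only cosmetic difference is that the paper reduces to attaching one cell at a time and then covers by exactly two open sets, whereas you attach all $p$-cells simultaneously with a multi-set cover; either bookkeeping is fine.
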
 

\begin{proof}
It suffices to show that, if $X$ is obtained from a subcomplex $X'$ by attaching a single cell of dimension $n \geq 2$, then the induced map $\pi_1(|X'|,x) \to \pi_1(|X|,x)$ is surjective.  We may assume $X$ and $X'$ are connected.  In this case, $|X|$ is the identification space obtained from $|X'|$ and $B^n/G$ by identifying the points of $S^{n-1}/G$ with their images in $|X'|$ under the attaching map, where $G \leq O(n)$ is finite.  We write $0$ for the origin in~$B^n$ and express $|X|$ as the union of the open sets $|X|-\{0\}$ and $(B^n-S^{n-1})/G$. Their intersection is homeomorphic to $\R\times(S^{n-1}/G)$, hence connected.  By van Kampen's theorem, $\pi_1(|X|)$ is generated by the images under inclusion of $\pi_1(|X|-\{0\})$ and $\pi_1((B^n-S^{n-1})/G)$. The first of these coincides with the image under inclusion of $\pi_1(|X'|)$. This is because the radial deformation retraction of $B^n-\{0\}$ onto $S^{n-1}$ is $G$-equivariant and therefore induces a deformation retraction from $(B^n-\{0\})/G$ into $S^{n-1}/G$.  Since $\pi_1((B^n-S^{n-1})/G)$ is trivial, $\pi_1(|X|)$ is generated by the image under the inclusion of $\pi_1(|X'|)$, as required.
\end{proof}

It follows from Theorem~\ref{thm:fundamentalgroup} that $H_1$ is also generated by loops in the $1$-skeleton of a finite-dimensional symmetric CW-complex. This surjectivity on $H_1$ and $\pi_1$ looks like a weak result, but it is the only general result of this nature that one can expect. Suppose $k>0$. We will exhibit a finite symmetric $\Delta$-complex whose dimension $n$ is larger than $k+1$, such that the inclusion of $|X^{(n-1)}|$ into $|X|$ induces non-injective maps on $H_k$ and $\pi_k$ and non-surjective maps on $H_{k+1}$ and $\pi_{k+1}$. We begin with symmetric CW-complex examples because they make the essentials more visible. See Remark~\ref{rem:higherk} for an example in nature of a symmetric $\Delta$-complex~$X$ such that $k$th homology and homotopy groups of the  $k$-skeleton do not surject onto those of~$X$.

\begin{example}[Attaching a symmetric $n$-cell can kill $\pi_1$ and $H_k$ for odd $k<n-1$]
    \label{ex:RPn}
    Suppose $n>2$, let $G\leq O(n)$ be generated by the negation map, and consider $B^n/G$. The  CW complex structure on $|X'|=S^{n-1}/G\iso\R P^{n-1}$ is not important.  We regard $B^n/G$ as a single symmetric $n$-cell, attached to $|X'|$ in the obvious way.  The result is contractible.  Therefore, attaching this symmetric $n$-cell kills not just $H_{n-1}(\R P^{n-1};\Z)$ but also
    $\pi_1(\R P^{n-1})\iso\Z/2\Z$ and $H_{i}(\R P^{n-1};\Z)\iso\Z/2\Z$ where $i< n-1$ is positive and odd. 
\end{example}

\begin{example}[Attaching a symmetric $n$-cell can kill $\pi_k$ and $H_k$ for $2\leq k<n-1$]
    \label{ex:RPnSuspended}
    As a variation on the previous example, take $2\leq k\leq n-2$ and let $g\in O(n)$ act on $\R^n$ by fixing pointwise a $(k-1)$-dimensional subspace and acting by negation on its orthogonal complement. Write $G\iso\Z/2\Z$ for the group generated by~$g$.  Again $B^n/G$ is contractible, but this time $|X'|=S^{n-1}/G$ is the join of $\R P^{n-k}$ and $S^{k-2}$.  Taking the join of a space with $S^{k-2}$ is the same as (unreducedly) suspending it $k-1$ times. So the reduced homology of $|X'|$ is obtained by shifting the reduced homology of $\R P^{n-k}$ up $k-1$ degrees. In particular, the first nonzero homology group of $|X'|$ in positive degree is $H_k(|X'|;\Z)\iso H_1(\R P^{n-k};\Z)\iso\Z/2\Z$. Since we chose $k\geq2$, we always suspend at least once, so $|X'|$ is simply connected. Combined with the Hurewicz theorem, this shows that the first nontrivial homotopy group of $|X'|$ is $\pi_{k}(|X'|)\iso H_{k}(|X'|;\Z)\iso\Z/2\Z$. So the maps on $H_k$ and $\pi_k$ induced by
    $|X'|\to|X|$ are not injective, even though $X$ is obtained from $X'$ by attaching a single symmetric $n$-cell.
\end{example}

The next few examples are quotients of a standard simplex. Let $G \leq \mathfrak{S}_{n+1}$. Then the quotient $X = \Delta^n / G$ naturally inherits the structure of a symmetric $\Delta$-complex, with $X_q$ being the set of $G$-orbits of $q$-faces of $\Delta^n$.  Then $|X^{(n-1)}|$ is the quotient of the boundary of the simplex by~$G$. The geometric realization $|X|$ is contractible because it is the cone over $|X^{(n-1)}|$.

\begin{example}[Analogue of Example~\ref{ex:RPnSuspended} for symmetric $\Delta$-complexes]
    \label{ex:RPnPolyhedral} 
    Suppose $k\geq3$ is given, choose $\ell$ satisfying $3\leq  \ell \leq k$, and set $n=k + \ell -1$. Consider the subgroup $G$ of $O(n)$ generated by the involution $g$ that fixes all but the first $2 \ell$ vertices of $\Delta^n$, which it permutes by $(01)(23)\cdots(2\ell-2\,\, 2\ell-1)$. This makes sense since $2 \ell \leq n+1$. We make the affine span of $\Delta^n\subseteq\R^{n+1}$ into a  vector space~$V$ by taking its barycenter as the origin. It is easy to see that $g$ acts on $V$ by negating an $\ell$-dimensional subspace and pointwise fixing its orthogonal complement, which has dimension $k-1$. We can identify the unit sphere in $V$ with the boundary of $\Delta^n$ by radial projection.  This extends to a $G$-equivariant identification of  $\Delta^n$ with the unit ball in~$V$.  Therefore every feature of the previous example carries over to the symmetric $\Delta$-complex $\Delta^n/G$. In particular, the maps on $\pi_k$ and $H_k$ induced by $|(\Delta^n/G)^{(n-1)}|\to|\Delta^n/G|$ are not injective. We needed the restriction $\ell \geq 3$ to get $n>k+1$, and we needed $\ell \leq k$ so that the permutation of the vertices of $\Delta^n$ makes sense.  Hence these examples only occur for $k\geq3$. For $k=1,2$ we refer to the following examples.
\end{example}

\begin{example}[Attaching a symmetric $3$-simplex can kill $\pi_1$ and $H_1$]
    \label{ex:FourCycle}
    We construct a $3$-dimensional contractible symmetric $\Delta$-complex whose $2$-skeleton is not simply connected. Take $G\iso\Z/4\Z$ to be generated by an element of $O(3)$ that permutes the vertices of $\Delta^3$ cyclically.  The quotient of the boundary can be worked out by taking one facet as a fundamental domain for~$G$ and working out the edge pairings. It turns out to be $\R P^2$.  Taking $X=\Delta^3/G$, it follows that $|X|$ is simply connected, even though its $2$-skeleton is not:  $\pi_1(|X^{(2)}|)\iso\pi_1(\R P^2)\iso\Z/2\Z$. 
\end{example}

\begin{example}[Attaching a symmetric $4$-simplex can kill $\pi_2$ and $H_2$] 
    \label{ex:FourCycleFourSimplex}
    We construct a $4$-dimensional contractible symmetric $\Delta$-complex whose $3$-skeleton is simply connected and has nontrivial $H_2$ (hence $\pi_2$). Take $X=\Delta^4/G$ where $G\iso\Z/4\Z$ fixes one vertex and permutes the others cyclically. This example contains the previous one; the quotient of the boundary of $\Delta^4$ is the unreduced suspension of $\R P^2$.  The suspension points are the fixed points of~$G$, namely the fixed vertex and the barycenter of the opposite facet.  
\end{example}

\begin{example}[Non-surjectivity of $\pi_k$ and $H_k$]
    \label{ex:notsurjective}
    Take $X$ as in any of the previous examples, and define $Z$ as the symmetric CW-complex or symmetric $\Delta$-complex obtained by identifying the two copies of $X$ along their  $(n-1)$-skeleta. So $Z^{(n-1)}=X'$ in the notation of the previous examples.  Let $k>1$ be the smallest degree for which $H_k(|X'|;\Z)\neq0$.  Since $Z$ is the (unreduced) suspension of~$X'$, we have $H_{k+1}(|Z|;\Z)\neq0$.  The Hurewicz theorem shows that $\pi_{k+1}(|Z|)$ is also nonzero.  On the other hand, the natural maps $H_{k+1}(|X'|;\Z)\to H_{k+1}(|Z|;\Z)$ and $\pi_{k+1}(|X'|)\to\pi_{k+1}(|Z|)$ are the zero maps because $|X'|\to|Z|$ factors through the contractible space $|X|$.  So the induced maps on $H_{k+1}$ and $\pi_{k+1}$ are not surjective.
\end{example}

\section{Computing homology using the filtration by skeleta} \label{sec:spectral}

Let $A$ be an abelian group.  Any finite filtration of a topological space $Y$ by subspaces $$\emptyset = Y^{-1} \subset Y^0 \subset Y^1 \subset \cdots \subset Y^n = Y$$ induces a filtration on the singular chain complex with coefficients in $A$,
\[
0 \subset C(Y^0; A) \subset C(Y^1; A) \subset \cdots \subset C(Y^n; A) = C(Y; A).
\]
This filtration on $C(Y)$ gives rise to a spectral sequence with 
\[
E_0^{p, q} = C_{p+q}(Y^p, Y^{p-1}; A) = C_{p+q}(Y^p; A)/ C_{p+q}(Y^{p-1}; A),
\]
and
\[
E_1^{p, q} = H_{p+q}(Y^p, Y^{p-1}; A),
\]
that converges to
\[
E_\infty^{p,q} = \frac{\im (H_{p+q}(Y^p; A) \to H_{p+q} (Y; A))}{ \im (H_{p+q}(Y^{p-1}; A) \to H_{p+q}(Y; A))}.
\]

\begin{remark}
The higher differentials in this spectral sequence may be understood as follows.  An element of $E^1_{p,q}$ is represented by a $(p+q)$-chain $\sigma$ in $Y^p$ whose boundary $\partial \sigma$ is a $(p+q-1)$-cycle in $Y^{p-1}$.  If $\partial \sigma$ is contained in $Y^{p-r}$ but not in $Y^{p-r-1}$, then $[\sigma]$ survives to $E_r$, and $d_r \colon E_r^{p,q} \to E_r^{p-r, q+r-1}$ maps $[\sigma]$ to $[\partial \sigma]$ in the surviving subquotient of $H_{p+q-1}(Y^{p-r}, Y^{p-r-1}; A)$.
\end{remark}

This spectral sequence will be our main tool for understanding the homology of symmetric CW-complexes.  When applied to the filtration by skeleta, it gives the following.

\begin{theorem} \label{thm:cellularhom}
Let $X$ be a finite-dimensional symmetric CW-complex whose $p$-cells are $\{ C_{i} \cong (B^p)^\circ / G_i : i \in I^p \}$, for some finite subgroups $G_i \subset O(p)$.  Then there is a spectral sequence with 
\[
E_1^{0,q} = H_q (|X^{(0)}|; A),  \mbox{ \ \ and \ \ } E_1^{p,q} = \bigoplus_{i \in I^p} \widetilde{H}_{p+q-1}(S^{p-1} / G_i; A),
\]
 for $p \geq 1$, that converges to
\[
E_\infty^{p,q} = \frac{\im (H_{p+q}(|X^{(p)}|; A) \to H_{p+q} (|X|; A))}{ \im (H_{p+q}(|X^{(p-1)}|; A) \to H_{p+q}(|X|; A))}.
\]
\end{theorem}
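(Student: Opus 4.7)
The plan is to apply the general filtration spectral sequence introduced at the start of this section to $Y = |X|$ equipped with the skeletal filtration $Y^p = |X^{(p)}|$. Since $X$ is finite-dimensional, this filtration is finite, so the spectral sequence is bounded and converges strongly to the claimed $E_\infty$-page directly from the general framework. The entire task thus reduces to identifying the $E_1$-page, namely the relative homology groups $H_{p+q}(|X^{(p)}|,|X^{(p-1)}|; A)$.

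For $p=0$ one has $|X^{(-1)}|=\emptyset$, so $E_1^{0,q} = H_q(|X^{(0)}|;A)$, as claimed. For $p \ge 1$, I would use the description from Section~\ref{sec:symmcomplex} that $|X^{(p)}|$ is the mapping cone of the attaching map $\coprod_{i \in I^p} S^{p-1}/G_i \to |X^{(p-1)}|$, i.e., the pushout $|X^{(p-1)}| \cup_{\coprod_i S^{p-1}/G_i} \coprod_i B^p/G_i$. First I would verify that $(|X^{(p)}|, |X^{(p-1)}|)$ is a good pair: a neighborhood of $|X^{(p-1)}|$ that deformation retracts onto it is obtained by adjoining, to $|X^{(p-1)}|$, the half-open collar $\{tx : 1/2 < t \le 1\}/G_i$ in each cell, with retraction given by radial projection to the boundary. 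This is well-defined on the quotient because each $G_i \le O(p)$ preserves the radial coordinate, so radial homotopies on $B^p$ descend $G_i$-equivariantly to $B^p/G_i$.

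With the good-pair property in hand, I would then compute
\[
H_{p+q}(|X^{(p)}|,|X^{(p-1)}|;A)\cong \widetilde H_{p+q}\bigl(|X^{(p)}|/|X^{(p-1)}|;A\bigr) \cong \bigoplus_{i\in I^p} \widetilde H_{p+q}\bigl((B^p/G_i)/(S^{p-1}/G_i); A\bigr),
\]
where the last identification uses that collapsing $|X^{(p-1)}|$ identifies the quotient with the wedge sum over the cells. Each summand is the reduced homology of the cofiber of $S^{p-1}/G_i \hookrightarrow B^p/G_i$; since $B^p/G_i$ is contractible via the $G_i$-equivariant straight-line homotopy to the origin, this cofiber is weakly equivalent to the unreduced suspension of $S^{p-1}/G_i$, so $\widetilde H_{p+q}$ of it is $\widetilde H_{p+q-1}(S^{p-1}/G_i; A)$, matching the claim.

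The main technical issue is the good-pair verification and excision in the symmetric setting, since the cells are quotients of balls rather than balls themselves; but the orthogonality of the $G_i$-actions ensures that all the radial constructions used for ordinary CW-complexes descend to the quotients without modification, and the standard argument goes through essentially verbatim.
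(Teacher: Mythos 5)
Your proof is correct and follows essentially the same route as the paper's: apply the spectral sequence of the skeletal filtration, and identify the relative homology $H_{p+q}(|X^{(p)}|,|X^{(p-1)}|;A)$ via the wedge-of-unreduced-suspensions description of $|X^{(p)}|/|X^{(p-1)}|$. You supply useful detail the paper omits (the good-pair verification via the $G_i$-equivariant radial collar, and the explicit cofiber computation), but one could shorten the last step by noting that $B^p/G_i$ is literally the cone on $S^{p-1}/G_i$, so the cofiber is homeomorphic (not merely weakly equivalent) to the unreduced suspension.
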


\begin{proof}
Consider the spectral sequence associated to the filtration of $X$ by its skeleta $X^{(p)}$, which has $E_1^{p, q} = H_{p+q}(|X^{(p)}|, |X^{(p-1)}|; A)$.  For $p > 0$, each connected component of $|X^{(p)}| / |X^{(p-1)|}$ is homeomorphic to the wedge sum of unreduced suspensions $S (S^{(p-1)}/G_i)$ associated to $p$-cells $C_i = B^p / G_i$ in that component, and the theorem follows.
\end{proof}

\noindent If we take $A = \Q$, then $E_1^{p,q}$ vanishes for $q \neq 0$, so we get a single chain complex, the $q = 0$ row of $E_1$, that computes the rational homology of $|X|$.  When $X$ is a symmetric $\Delta$-complex, this is precisely the rational cellular chain complex presented in \cite{CGP1}.

For our applications to $\Delta_{g,n}$, which typically has many cells and contains a large contractible subcomplex, we use the following variant of Theorem~\ref{thm:cellularhom}.

\begin{theorem} \label{thm:cellularhom2}
Let $X$ be a finite-dimensional symmetric CW-complex, let $Z \subset X$ be a subcomplex, and let $\{ C_{i} \cong (B^p)^\circ / G_i : i \in I^p \}$, be the cells of $X$ that are not in $Z$.  Then there is a spectral sequence with 
\[
E_1^{0,q} = H_q (|Z| \cup X^{(0)}; A), \mbox{ \ \ and \ \ } E_1^{p,q} = \bigoplus_{i \in I^p} \widetilde{H}_{p+q-1}(S^{p-1} / G_i; A),
\]
for $p \geq 1$, that converges to
\[
E_\infty^{p,q} = \frac{\im (H_{p+q}(|X^{(p)}|; A) \to H_{p+q} (|X|; A))}{ \im (H_{p+q}(|X^{(p-1)}|; A) \to H_{p+q}(|X|; A))}.
\]
\end{theorem}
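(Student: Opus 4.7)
The plan is to mimic the proof of Theorem~\ref{thm:cellularhom}, but with the bare skeletal filtration replaced by one whose starting piece has been enlarged to contain $Z$. Concretely, I would set $Y^{-1} = \emptyset$ and $Y^p = |Z| \cup |X^{(p)}|$ for $p \geq 0$. Since $X$ is finite-dimensional, this is a finite filtration of $|X|$ by closed subspaces with $Y^n = |X|$ for $n = \dim X$, so the general spectral sequence recalled at the beginning of Section~\ref{sec:spectral} applies and converges to the associated graded of the induced filtration of $H_*(|X|; A)$, yielding the $E_\infty$ term in the statement.

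Next I would identify the $E_1$ page. For $p = 0$, the convention $Y^{-1} = \emptyset$ gives directly $E_1^{0,q} = H_q(Y^0; A) = H_q(|Z| \cup X^{(0)}; A)$. For $p \geq 1$, the key observation is that every $p$-cell of $Z$ lies already in $|Z| \subseteq Y^{p-1}$, so the only cells in $Y^p \setminus Y^{p-1}$ are precisely the $p$-cells $C_i \iso (B^p)^\circ / G_i$ of $X$ not in $Z$. Using the mapping-cone description at the end of Section~\ref{sec:symmcomplex}, $Y^p$ is then obtained from $Y^{p-1}$ by attaching a copy of $B^p / G_i$ along $S^{p-1}/G_i$ for each $i \in I^p$. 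Excising across small open neighborhoods of the interiors $(B^p)^\circ / G_i$, and using the long exact sequence of each pair $(B^p/G_i, S^{p-1}/G_i)$ together with the contractibility of the cone $B^p/G_i$, would give
\[
E_1^{p,q} = H_{p+q}(Y^p, Y^{p-1}; A) \iso \bigoplus_{i \in I^p} \widetilde H_{p+q-1}(S^{p-1}/G_i; A),
\]
matching the claim.

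The step that requires the most care is the excision identification for $p \geq 1$. Since each $C_i$ is a quotient of an open ball by a possibly nontrivial finite group and its closure may meet cells of $Z$ and other cells of $X$ in complicated ways, one must verify that collapsing $Y^{p-1}$ turns the images of the $\overline{C_i}$ into a disjoint wedge in $Y^p / Y^{p-1}$. This follows from Definition~\ref{def:gcw}, whose second clause guarantees that the closure of each $p$-cell meets only finitely many cells, all of lower dimension; away from the collapsed base point the closed cells are therefore genuinely disjoint, and one can choose the excised neighborhoods inside the individual attaching charts. Once this is verified, everything else is a direct relativization of the proof of Theorem~\ref{thm:cellularhom} to the pair $(X, Z)$.
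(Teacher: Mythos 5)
Your proof is correct and takes essentially the same approach as the paper: the paper's proof is a one-liner stating that the argument is identical to Theorem~\ref{thm:cellularhom} applied to the filtration $W^{(p)} = Z \cup X^{(p)}$, which is exactly your filtration $Y^p = |Z| \cup |X^{(p)}|$. Your identification of $E_1^{p,q}$ via excision and the long exact sequence of $(B^p/G_i, S^{p-1}/G_i)$ is just a more explicit version of the paper's observation that $W^{(p)}/W^{(p-1)}$ is a wedge of unreduced suspensions $S(S^{p-1}/G_i)$, whose reduced homology is the shifted reduced homology of the $S^{p-1}/G_i$.
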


\begin{proof}
The argument is identical to the proof of Theorem~\ref{thm:cellularhom}, using the filtration of $X$ by the subcomplexes $W^{(p)} = Z \cup X^{(p)}$.  
\end{proof}

\section{Quotients with reflections}

As explained above, a symmetric CW-complex is filtered by its $p$-skeleta, and the associated graded of the induced filtration on singular homology is the abutment of a spectral sequence whose $E_1$-page is a direct sum of homology groups of quotients of spheres by finite subgroups of orthogonal groups.  The topology of such quotients of spheres is understood in only a few special cases.  Swartz considered the case where the subgroup of the orthogonal group is isomorphic to $(\Z/2\Z)^k$ \cite{Swartz02}, and Lange classified the cases where the quotient is a PL-sphere \cite{Lange16} or a topological sphere \cite{Lange19}. See also \cite{BlumSmithMarques18} for applications of Lange's results to quotients of $S^{n-1}$ by subgroups of the permutation group $\mathfrak{S}_{n+1}$.

\begin{proposition} \label{prop:reflection}
Let $H < O(n)$ be a finite subgroup that contains a reflection.  Then $S^{n-1} / H$ is contractible.
\end{proposition}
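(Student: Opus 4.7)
The plan is to factor the quotient through the (normal) subgroup of $H$ generated by reflections, where classical Coxeter theory makes the topology transparent, and then handle the residual action by averaging.

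First, I would set $W\leq H$ to be the subgroup generated by all reflections in $H$. Since any conjugate of a reflection is a reflection, $W$ is normal in $H$, and it contains the given reflection, so $W$ is a nontrivial finite real reflection group on $\R^n$. Let $\bar C\subset\R^n$ denote the closure of a fundamental Weyl chamber: by standard theory $\bar C$ is a closed convex polyhedral cone with apex $0$, and is a strict fundamental domain for the $W$-action. Hence the quotient map restricts to a homeomorphism $\bar C\cap S^{n-1}\xrightarrow{\,\cong\,}S^{n-1}/W$.

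Next I would set $N:=\{h\in H : h\bar C=\bar C\}$. Because $W$ acts simply transitively on the set of Weyl chambers, each coset $Wh$ meets $N$ in exactly one element; in particular $H=W\rtimes N$, and the composition $N\hookrightarrow H\twoheadrightarrow H/W$ is an isomorphism. Thus $S^{n-1}/H\cong(\bar C\cap S^{n-1})/N$. To produce an $N$-fixed point $\hat y\in C\cap S^{n-1}$ --- where $C$ denotes the open chamber --- I would average any $x\in C$ over the finite group $N$ and normalize; the average lies in $C$ since $N$ preserves $C$ (as it preserves $\bar C$ homeomorphically) and $C$ is open convex.

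I would then introduce the straight-line retraction
\[
H_t(z)\;=\;\frac{(1-t)z+t\hat y}{\bigl|(1-t)z+t\hat y\bigr|}
\]
on $\bar C\cap S^{n-1}$. This is manifestly $N$-equivariant, since $N$ acts linearly and fixes $\hat y$, and descends to a deformation retraction of $S^{n-1}/H$ onto $[\hat y]$, proving contractibility --- provided the denominator never vanishes.

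The only delicate point is verifying that $-\hat y\notin \bar C$. The plan there is to observe that if both $\pm\hat y$ lay in $\bar C$, convexity would force the whole line $\R\hat y$ into the cone $\bar C$, so $\hat y$ would belong to the lineality subspace of $\bar C$. But this lineality subspace is the common fixed space of $W$, contained in every reflection hyperplane and hence disjoint from the open chamber $C$ --- a contradiction. This is the main (and essentially only) technical step.
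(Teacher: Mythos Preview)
Your proof is correct and follows essentially the same route as the paper: pass to the normal reflection subgroup, identify $S^{n-1}/W$ with a closed chamber intersected with the sphere, and then equivariantly retract to an averaged fixed point of the chamber stabilizer via straight-line flow plus radial projection. Your additional care in checking the semidirect product decomposition and in ruling out $-\hat y\in\bar C$ (so the normalization is well defined) fills in details the paper leaves implicit, but the underlying argument is the same.
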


\begin{proof}
Let $K < H$ be the subgroup generated by reflections.  
    Each reflection in~$K$ fixes a hyperplane pointwise, and we let $C$
    be a chamber, i.e., the closure of a component of the complement
    of the union of these hyperplanes. 
    Every point in $\R^n$ is $K$-equivalent to exactly one point in $C$ \cite[V.3.3]{Bourbaki68}, and hence $C \to \R^n / K$ is a homeomorphism.  Furthermore, since $K$ is normal in $H$, we have homeomorphisms
\[
    S^{n-1} / H \cong (S^{n-1} / K) / (H/K) \cong (S^{n-1}\cap C) / H_C,
\]
where $H_C < H$ is the stabilizer of $C$.  
    We now show that $(S^{n-1} \cap C) / H_C$ is contractible.  

 There is a point $v$ in the interior of $C$ that is invariant
    under $H_C$, e.g., choose any point in the interior of $C$ and take the
    sum of the points in its $H_C$-orbit.  Composing straight line flow
    toward $v$ with radial projection to the unit sphere gives a deformation
    retraction of $S^{n-1} \cap C$ to the point $v/|v|$. This deformation
    retraction is $H_C$-equivariant, and hence induces a deformation
    retraction of   $(S^{n-1}\cap C)/H_C$ to a point.
\end{proof}

\section{Applications to moduli spaces of tropical curves} \label{sec:applications}

One strategy for studying the topology of $\Delta_{g,n}$ is to identify large contractible subcomplexes.  The largest contractible subcomplex identified in \cite[Theorem~1.1]{cgp3} is the locus $\Delta_{g,n}^{br}$ of tropical curves with bridges, cut vertices, loop edges, repeated markings, or vertices of positive weight; it is the closure of the locus of tropical curves with bridges.  We now use Proposition~\ref{prop:reflection} to produce an even larger contractible subcomplex.

\begin{theorem} \label{thm:contractible}
The subcomplex $\Delta^{bm}_{g,n} \subset \Delta_{g,n}$ parametrizing tropical curves with bridges, cut vertices, loops, repeated markings, vertices of positive weight, or multiple edges is contractible, for $g \geq 1$.
\end{theorem}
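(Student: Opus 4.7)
The plan is to build up $\Delta^{bm}_{g,n}$ from the contractible subcomplex $\br$ (contractible by \cite[Theorem~1.1]{cgp3}) by attaching the remaining cells one at a time, using Proposition~\ref{prop:reflection} to show each attachment is a homotopy equivalence. A cell of $\Delta^{bm}_{g,n}$ not contained in $\br$ corresponds to a stable weighted marked graph $G$ whose only defect from the theorem's list is at least one pair of parallel edges.

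Enumerate these extra cells as $C_1, \ldots, C_N$ in nondecreasing order of dimension, and set $Y_0 = \br$ and $Y_i = Y_{i-1} \cup C_i$. A quick combinatorial check shows each $Y_i$ is genuinely a subcomplex: the boundary of a $p_i$-cell $C_i$ is a union of cells corresponding to nontrivial edge contractions of $G$. Contracting one edge of a parallel pair turns the other into a loop, putting that boundary cell in $\br \subset Y_0$; contracting any other edge preserves the parallel pair, so the resulting cell is in $\Delta^{bm}_{g,n}$ and of strictly lower dimension, hence already lies in $Y_{i-1}$.

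Let $H_i$ be the stabilizer of $C_i$. The involution $\tau$ swapping the two edges of a chosen parallel pair is an automorphism of $G$ preserving weights and markings, so $\tau \in H_i$. Realizing $\Delta^{p_i}$ as the unit ball in its affine span recentered at the barycenter, as in Example~\ref{ex:RPnPolyhedral}, $\tau$ acts on $\R^{p_i}$ as the reflection fixing the hyperplane that equalizes the two swapped coordinates. Proposition~\ref{prop:reflection} therefore gives that $S^{p_i - 1}/H_i$ is contractible, and the same chamber argument shows $B^{p_i}/H_i$ is contractible too. Since $S^{p_i - 1}/H_i \hookrightarrow B^{p_i}/H_i$ is a cofibration and a homotopy equivalence, its pushout $Y_{i-1} \hookrightarrow Y_i$ is also a homotopy equivalence. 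Induction yields a homotopy equivalence $\br \hookrightarrow \Delta^{bm}_{g,n}$, so $\Delta^{bm}_{g,n}$ is contractible.

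I expect the subtlest part to be not any single step but rather the bookkeeping around the attaching procedure: verifying that the increasing-dimension ordering really does produce a nested sequence of subcomplexes, and that the standard pushout lemma for cofibrations that are homotopy equivalences transfers cleanly to the symmetric CW setting. The combinatorial analysis of edge contractions above handles the first point, and the attaching maps $B^p/H \to X$ from Definition~\ref{def:gcw} are cofibrations in the appropriate sense, so the second is routine; the genuine new input is just the observation that the swap of parallel edges supplies the reflection needed to invoke Proposition~\ref{prop:reflection}.
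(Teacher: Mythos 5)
Your proof is correct and follows essentially the same route as the paper's: both start from the contractibility of $\Delta^{br}_{g,n}$, observe that every cell of $\Delta^{bm}_{g,n}$ not in $\Delta^{br}_{g,n}$ has a stabilizer containing the reflection that swaps a pair of parallel edges, apply Proposition~\ref{prop:reflection} to conclude the boundary sphere quotient is contractible, and then observe that attaching a cell along a contractible boundary quotient is a homotopy equivalence (the paper phrases this via iterated mapping cones, you via the gluing lemma for cofibrations that are homotopy equivalences — the same fact). Your more explicit verification that the one-cell-at-a-time, dimension-ordered attaching produces a nested sequence of subcomplexes is a reasonable unpacking of the paper's briefer remark that contracting an edge in a graph with a multiple edge yields a graph with a loop or a multiple edge.
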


\noindent The superscript $bm$ reflects the fact that this subcomplex is the closure of the locus of tropical curves with \emph{bridges} or \emph{multiple} edges.

\begin{proof}
First, note that contracting an edge in a graph with a multiple edge produces a graph with either a loop edge or a multiple edge.  Hence $\Delta^{bm}_{g,n}$ is a subcomplex. 

Note that $\Delta^{bm}_{g,n}$ is obtained from the subcomplex $\Delta^{br}_{g,n}$ as an iterated mapping cone, for a finite sequence of continuous maps from quotients of spheres $S^{p-1} / \Aut(G)$, where $G$ is a graph with multiple edges.  Interchanging a pair of edges between the same endpoints acts by a reflection, and hence the quotient $S^{p-1} /\Aut(G)$ is contractible, by Proposition~\ref{prop:reflection}.

If $Y$ is contractible and $f \colon Y \to Z$ is continuous, then the inclusion of $Z$ into the mapping cone of $f$ is a homotopy equivalence. Applying this observation to the iterated mapping cone construction discussed above, we see that the inclusion of $\Delta^{br}_{g,n}$ in $\Delta^{bm}_{g,n}$ is a homotopy equivalence.  The subcomplex $\Delta^{br}_{g,n}$ is contractible \cite[Theorem~1.1]{CGP1}, and the theorem follows.
\end{proof}

We now show that $\Delta_{g,n}$ is simply connected, for $(g,n) \neq (0,4), (0,5)$.

\begin{proof}[Proof of Theorem~\ref{thm:simplyconnected-markedpoints}]
First, note that $\Delta_{0,n}$ is simply connected for $n \geq 6$, because it is homotopic to a wedge of spheres of dimension $n-4$ \cite{Vogtmann90}.  We therefore assume that $g \geq 1$.  We claim that the $1$-skeleton of $\Delta_{g,n}$ is contained in $\Delta^{bm}_{g,n}$, for $g \geq 1$.  To see this, note that $\Delta_{g,n}^{(1)}$ parametrizes stable tropical curves with $1$ or $2$ edges.  Either one of the edges is a bridge, all of the edges are loops, or there are two edges that together form a loop.  In particular, the underlying graph has either a loop, bridge, or multiple edges. This proves the claim.  Next, recall that the fundamental group is generated by loops in $\Delta_{g,n}^{(1)}$, by Theorem~\ref{thm:fundamentalgroup}.  If $g \geq 1$, then all such loops can be contracted in $\Delta^{bm}_{g,n}$, by Theorem~\ref{thm:contractible}, and hence $\Delta_{g,n}$ is simply connected.
\end{proof}

\begin{remark} \label{rem:alternateproof}
We briefly sketch an alternate proof that $\Delta_g$ is simply connected, suggested by A.~Putman. A theorem of Armstrong \cite{Armstrong65} says that, when a group $G$ acts simplicially on a simply connected simplicial complex $X$, the fundamental group of the quotient space $X/G$ is the quotient of $G$ by the subgroup generated by elements of $g$ that fix some point of $X$.  In particular, since $\Delta_g$ is the quotient of Harvey's complex of curves on a closed orientable surface of genus $g$ by the action of the mapping class group, and since the mapping class group is generated by Dehn twists, each of which fixes points in the curve complex, the quotient $\Delta_g$ is simply connected.  Variations on this argument are possible as well, e.g., using the description of $\Delta_g$ as the quotient of the simplicial completion of Culler-Vogtmann Outer Space by the action of $\mathrm{Out(F_g)}$, or using the action of the pure mapping class group on the complex of curves on a punctured surface to give an alternate proof of  Theorem~\ref{thm:simplyconnected-markedpoints}.
\end{remark}

Next, we show that $\Delta_3$ is homotopy equivalent to $S^5$.

\begin{proof}[Proof of Theorem~\ref{thm:sphere}]
First, note that $\Delta_3$ is homotopy equivalent to $\Delta_3 / \Delta_3^{bm}$, by Theorem~\ref{thm:contractible}.  Enumerating stable graphs of genus $3$ shows that the only one without bridges, cut vertices, loops, multiple edges, or vertices of positive weight is the complete graph $K_4$.  The cell of $\Delta_3$ corresponding to $K_4$ is the quotient of the open $5$-simplex, whose vertices correspond to the $2$-element subsets of a $4$-element set, by the permutation group $\mathfrak{S}_4$.  

It follows  $\Delta_3 / \Delta_3^{bm}$ is homeomorphic to the unreduced
    suspension $S (S^4 / \mathfrak{S}_4)$.  Each transposition in
    $\mathfrak{S}_4$ acts by a double transposition on the vertices of the
    $5$-simplex.
    This is a rotation in the sense of \cite{Lange16}, meaning that
    its fixed-point set has codimension~$2$. 
    Any quotient of $S^{n-1}$ by a finite group generated by such reflections is PL-homeomorphic to $S^{n-1}$ \cite{Lange16}.  In
    particular, $S^4 / \mathfrak{S}_4$ is homeomorphic to $S^4$, and hence
    $\Delta_3$ is homotopy equivalent to $S(S^4) \cong S^5$.
\end{proof}

We conclude by showing that $\Delta_4$ has nontrivial $3$-torsion in $H_5$ and nontrivial $2$-torsion in $H_6$ and $H_7$.  The proof uses Theorems~\ref{thm:cellularhom2} and \ref{thm:contractible}, together with explicit computations of the integral homology of certain quotients of spheres $S^{p-1}$ by subgroups of the permutation group $\mathfrak{S}_{p+1}$.  These computations were carried out with computer assistance.  We considered the $\Delta$-complex structure on $S^{p-1}/ \mathfrak{S}_{p+1}$ induced by barycentric subdivision on the boundary of $\Delta^p$, and used a python script to generate matrices for the simplicial chain complex.  We then used Magma \cite{Magma} to compute the homology. The code may be found at the following link. 

\begin{center}
	\url{https://github.com/dcorey2814/homologyQuotientSpheres.git} 
\end{center}

\begin{proof}[Proof of Theorem~\ref{thm:torsion}]
We compute the $E_1$-page of the spectral sequence given by Theorem~\ref{thm:cellularhom2}, using the contractible subcomplex $Z = \Delta_4^{bm}$.

By enumerating the stable graphs of genus 4, we see that there are precisely 3 cells in $\Delta_4$ that are not contained in $\Delta_4^{bm}$.  These are the edge graph $G$ of a square pyramid, the edge graph $G'$ of a triangular prism, and the complete bipartite graph $K_{3,3}$.  For each graph, we computed the reduced homology of the corresponding sphere quotient. The nonzero reduced homology groups are as follows.
\begin{align*}
\widetilde H_k (S^6/ \Aut(G) ; \Z) & = \left \{ \begin{array}{ll} \Z/ 4\Z, & \mbox{\ for \ } k = 4, \\
										      \Z / 2\Z, & \mbox{\ for \ } k = 5; \end{array} \right. \\
\widetilde H_k (S^7 / \Aut(G') ; \Z) & = \left \{ \begin{array}{ll} \Z/ 2\Z, & \mbox{\ for \ } k = 5, \\
										      \Z / 2\Z, & \mbox{\ for \ } k = 6;  \end{array} \right. \\ 
\widetilde H_k (S^7/  \Aut(K_{3,3}); \Z) & = \left \{ \begin{array}{ll} \Z/ 3\Z, & \mbox{\ for \ } k = 4, \\
										      \Z / 4\Z, & \mbox{\ for \ } k = 5, \\
										      \Z / 2\Z, & \mbox{\ for \ } k = 6.
										      \end{array} \right.
\end{align*}

Consider the spectral sequence given by Theorem~\ref{thm:cellularhom2} for $X = \Delta_4$ and $Z = \Delta_4^{bm}$.  The nonzero terms on the $E_1$-page, aside from $E_1^{0,0} = \Z$, are:
\begin{align*}
E_1^{7,-2} & = \Z/4\Z, \\
E_1^{7,-1} & = \Z/2\Z,
\end{align*}
and 
\begin{align*}
E_1^{8,-3} & = \Z/3\Z, \\
E_1^{8,-2} & = \Z/4\Z \oplus \Z/2\Z, \\
E_1^{8,-1} & = \Z/2\Z \oplus \Z/2\Z.
\end{align*}
There are no differentials between nonzero terms on $E_r$, for $r > 1$, so the sequence degenerates at $E_2$.  The only differentials between nonzero terms on $E_1$ are $E_1^{8,-2} \to E_1^{7,-2}$ and $E_1^{8,-1} \to E_1^{7,-1}$.  Each source is larger than its target, so we conclude that both $E_\infty^{8,-2}$ and $E_\infty^{8,-1}$ contain nontrivial $2$-torsion. It follows that $H_6(\Delta_4;\Z)$ and $H_7(\Delta_4;\Z)$ contain nontrivial $2$-torsion.  Similarly, we see that $E_\infty^{8,-3} = \Z/3\Z$ and conclude that $H_5(\Delta_4;\Z)$ contains nontrivial $3$-torsion.
\end{proof}

\begin{remark} \label{rem:higherk}
In the proof of Theorem~\ref{thm:torsion}, we have seen that $\Delta_4^{(6)}$ is contained in a contractible subcomplex $\Delta_4^{bm}$.  However, $\Delta_4$ has nontrivial $H_5$ and $H_6$.  We conclude that $H_5(\Delta_4;\Z)$ and $H_6(\Delta_4;\Z)$ are not generated by cycles in the $5$-skeleton and $6$-skeleton, respectively.  By the Hurewicz Theorem, we see also that $\pi_5(\Delta_4;\Z)$ is not generated by maps of spheres into the $5$-skeleton.
\end{remark}

\bibliographystyle{amsalpha}
\bibliography{math}

\end{document}